\numberwithin{equation}{section}
\newtheorem{thm}{Theorem}[section]
\theoremstyle{plain}
\newtheorem{lem}[thm]{Lemma}
\theoremstyle{plain}
\theoremstyle{plain}
\newtheorem{definition}[thm]{Definition}
\theoremstyle{definition}
\newtheorem{rem}[thm]{Remark}
\newtheorem{ex}[thm]{Example}
\newcommand{\N}{{\mathbb N}}
\newcommand{\R}{{\mathbb R}}
\newcommand{\beq}{\begin{equation}}
\newcommand{\eeq}{\end{equation}}
\renewcommand{\le}{\leqslant}
\renewcommand{\ge}{\geqslant}
\def\XXint#1#2#3{{\setbox0=\hbox{$#1{#2#3}{\int}$ }
\vcenter{\hbox{$#2#3$ }}\kern-.57\wd0}}
\DeclareMathOperator*{\esssup}{ess\, sup}
\DeclareMathOperator*{\essinf}{ess\, inf}
\newcommand{\restr}[2]{\left.#1\right|_{#2}}
\newenvironment{enumroman}{\begin{enumerate}

}{\end{enumerate}}
\title[Differential inclusions with mixed conditions]{On ordinary differential inclusions with mixed boundary conditions}
\author[G.\ Bonanno]{Gabriele Bonanno}
\author[A.\ Iannizzotto]{Antonio Iannizzotto}
\author[M.\ Marras]{Monica Marras}
\address{Department of Civil, Computer, Construction, Environmental Engineering and Applied Mathematics
\newline\indent
University of Messina
\newline\indent
98166 Messina, Italy}
\email{bonanno@unime.it}
\address{Department of Mathematics and Computer Science
\newline\indent
University of Cagliari
\newline\indent
Viale L. Merello 92, 09123 Cagliari, Italy}
\email{antonio.iannizzotto@unica.it, mmarras@unica.it}
\subjclass[2010]{34A60; 34B24; 49J52}
\keywords{Differential inclusion; mixed boundary conditions; variational methods.}
\begin{document}

\begin{abstract}
By means of nonsmooth critical point theory, we prove existence of three weak solutions for an ordinary differential inclusion of Sturm-Liouville type involving a general set-valued reaction term depending on a parameter, and coupled with mixed boundary conditions. As an application, we give a multiplicity result for ordinary differential equations involving discontinuous nonlinearities.
\end{abstract}

\maketitle

\begin{center}
Version of \today\
\end{center}

\section{Introduction and main result}\label{sec1}

\noindent
We consider the following second order ordinary differential inclusion (o.d.i.), driven by a Sturm-Liouville type operator, and coupled with mixed boundary conditions:
\beq\label{odi}
\begin{cases}
-(p(x)u')'+q(x)u\in\lambda F(u) &\text{in $(a,b)$} \\
u(a)=u'(b)=0.
\end{cases}
\eeq
Here $a<b$ are real numbers, $p,q\in L^\infty(a,b)$ are s.t.
\[\essinf_{x\in(a,b)}p(x)=p_0>0, \ \essinf_{x\in(a,b)}q(x)\ge 0,\]
while $F:\R\to 2^\R$ is an upper semicontinuous (u.s.c.) set-valued mapping with compact convex values, and $\lambda>0$ is a parameter. O.d.i.'s of the type \eqref{odi} are a very general class of problems, as they extend both hemivariational inequalities and ordinary differential equations (o.d.e.'s), even in implicit form or with discontinuous nonlinearities, as was first noticed by {\sc Filippov} \cite{F1} for a first order problem.
\vskip2pt
\noindent
Most existence results for the solutions of o.d.i.'s are obtained through operator-based methods, such as selection theory, sub- and supersolutions, the theory of monotone operators or fixed point theory. See for instance the papers of {\sc Averna \& Bonanno} \cite{AB}, {\sc Erbe \& Krawcewicz} \cite{EK}, {\sc Frigon \& Granas} \cite{FG}, {\sc Kourogenis} \cite{K} and the monograph of {\sc Aubin \& Frankowska} \cite{AF}. All the mentioned papers, except \cite{AB} and \cite{K}, deal with convex-valued mappings.
\vskip2pt
\noindent
In order to achieve multiplicity results, the best choice seems to be that of applying variational methods. This can be done by exploiting the critical point theory for nonsmooth functional developed by {\sc Clarke} \cite{C1} (see also {\sc Gasi\'{n}ski \& Papageorgiou} \cite{GP}). Many authors, starting from the classical work of {\sc Chang} \cite{C}, have applied nonsmooth analysis to set-valued problems, often arising from either partial or ordinary differential equations with discontinuous nonlinearities, see for instance {\sc Bonanno \& Buccellato} \cite{BB}, {\sc Frigon} \cite{F}, {\sc Iannizzotto} \cite{I,I1}, {\sc Krastanov, Ribarska \& Tsachev} \cite{KRT}, {\sc Papageorgiou \& Papalini} \cite{PP}. Among the mentioned papers, \cite{F}, \cite{I1}, \cite{I}, and \cite{KRT} are concerned with general differential inclusions, while the others are mainly concerned with the case where the set-valued term is the subdifferential of a convenient non smooth potential. For a different viewpoint on set-valued problems seen in a variational framework, see also {\sc \'{C}wiszewski \& Kryszewski} \cite{CK}.
\vskip2pt
\noindent
Here we consider for the first time (to the best of our knowledge) a general o.d.i. with mixed boundary conditions. First, we develop a general variational framework for problem \eqref{odi}, and in doing so we extend the ideas of some previous works. Subsequently, by applying a three critical point for nonsmooth functionals due to {\sc Bonanno \& Marano} \cite{BM}, we prove existence of at least three solutions of problem \eqref{odi} for all $\lambda$ within a precisely determined interval. Our result extends that of {\sc Averna, Giovannelli \& Tornatore} \cite{AGT} to the set-valued case.
\vskip2pt
\noindent
A special case of our main result is the following:

\begin{thm}\label{thm-min}
Let $F:\R\to2^\R$ be u.s.c. with compact convex values, and 
$\alpha>0$, $s\in(1,2)$, $0<c<d$ s.t.
\begin{enumroman}
\item\label{thm-min1} $0\le\min F(t)\le\alpha(1+|t|^{s-1})$ for all $t\in\R$;
\item\label{thm-min2} $\displaystyle\frac{1}{c^2}\int_0^c\min F(t)\,dt<\frac{K}{d^2}\int_0^d \min F(t)\,dt$, with $K=3p_0\big(12\|p\|_\infty+4(b-a)^2\|q\|_\infty\big)^{-1}$.
\end{enumroman}
Moreover, set
\[\Lambda=\left(\frac{p_0d^2}{2K(b-a)^2}\Big(\int_0^d \min F(t)\,dt\Big)^{-1}, \ \frac{p_0 c^2}{2(b-a)^2}\Big(\int_0^c \min F(t)\,dt\Big)^{-1}\right).\]
Then, for all $\lambda\in\Lambda$ problem \eqref{odi} has at least three solutions.
\end{thm}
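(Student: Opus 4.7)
The plan is to cast \eqref{odi} in variational form and invoke the nonsmooth three critical points theorem of Bonanno--Marano \cite{BM}. On the Hilbert space
\[
W=\{u\in H^{1}(a,b):u(a)=0\},\qquad \|u\|^{2}=\int_{a}^{b}\bigl(p(x)|u'|^{2}+q(x)u^{2}\bigr)\,dx
\]
(the norm is equivalent to the $H^{1}$-norm since $p_{0}>0$, and the condition $u'(b)=0$ is natural), I define $\Phi(u)=\tfrac12\|u\|^{2}$, which is smooth and coercive, and $\Psi(u)=\int_{a}^{b}G(u(x))\,dx$ with $G(t)=\int_{0}^{t}\min F(s)\,ds$. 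Hypothesis \ref{thm-min1} and the upper semicontinuity of $F$ make $G$ locally Lipschitz, nondecreasing on $[0,+\infty)$ and nonpositive on $(-\infty,0]$, so $\Psi$ is locally Lipschitz on $W$; a nonsmooth chain rule of Chang--Clarke type then identifies the Clarke-critical points of $\varphi_{\lambda}=\Phi-\lambda\Psi$ with the weak solutions of \eqref{odi}.

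The two ingredients required by \cite{BM} are a level $r>0$ controlling $\sup_{\Phi\le r}\Psi$ from above and a ``large'' element $u_{d}$ controlling $\Psi/\Phi$ from below. From $u(a)=0$ and Cauchy--Schwarz I obtain $\|u\|_{\infty}^{2}\le\frac{b-a}{p_{0}}\|u\|^{2}$, so choosing $r=\frac{p_{0}c^{2}}{2(b-a)}$ forces $\|u\|_{\infty}\le c$ whenever $\Phi(u)\le r$, and the monotonicity of $G$ yields $\sup_{\Phi(u)\le r}\Psi(u)\le (b-a)\int_{0}^{c}\min F(t)\,dt$. As $u_{d}$ I take the piecewise linear function in $W$ equal to $d$ on $[\tfrac{a+b}{2},b]$ and interpolating linearly to $0$ at $a$: direct estimates give
\[
\Phi(u_{d})\le\frac{d^{2}\bigl(3\|p\|_{\infty}+(b-a)^{2}\|q\|_{\infty}\bigr)}{3(b-a)},\qquad \Psi(u_{d})\ge\frac{b-a}{2}\int_{0}^{d}\min F(t)\,dt,
\]
and the algebraic identity $\tfrac{p_{0}}{2K}=\tfrac{2}{3}\bigl(3\|p\|_{\infty}+(b-a)^{2}\|q\|_{\infty}\bigr)$, which follows from the definition of $K$ in \ref{thm-min2}, reorganises these into
\[
\frac{\Phi(u_{d})}{\Psi(u_{d})}\le\frac{p_{0}d^{2}}{2K(b-a)^{2}\int_{0}^{d}\min F(t)\,dt}.
\]

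With these bounds, hypothesis \ref{thm-min2} is exactly the Bonanno--Marano strict inequality $\sup_{\Phi\le r}\Psi(u)/r<\Psi(u_{d})/\Phi(u_{d})$; the condition $\Phi(u_{d})>r$ required by the theorem follows from $d>c$, and the subcritical growth $s<2$ in \ref{thm-min1} makes $\varphi_{\lambda}$ coercive for every $\lambda>0$, supplying the compactness hypothesis. The resulting interval of admissible parameters contains $\Lambda$, so \cite{BM} yields three Clarke-critical points of $\varphi_{\lambda}$, hence three weak solutions of \eqref{odi}, for each $\lambda\in\Lambda$. The main obstacle I expect is the nonsmooth passage from critical points of $\varphi_{\lambda}$ back to weak solutions of \eqref{odi}: because $F$ is only upper semicontinuous, $\min F$ can fail to be continuous and $\Psi$ is genuinely nondifferentiable, so one must invoke a Chang-type selection argument (as in \cite{C,I1}) to extract from an abstract element of the Clarke subdifferential of $\Psi$ at $u$ a measurable $w(x)\in F(u(x))$ realising it; the remaining bookkeeping of constants in the estimates on $u_{d}$ is then routine.
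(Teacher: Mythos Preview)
Your proposal is correct and follows essentially the same route as the paper: the same space, the same functionals $\Phi$ and $\Psi$ built from the selection $f=\min F$, the same piecewise-linear test function $u_d$, the same choice of $r$, and the same application of the Bonanno--Marano theorem, with the Chang-type subdifferential representation (your anticipated ``main obstacle'') handled exactly as in \cite{C}. The only organisational difference is that the paper first proves a more general three-solutions theorem for \eqref{odi} allowing an arbitrary Baire-measurable selection $f$ of $F$ (your $G$ becomes $J_f$), and then obtains the stated result by specialising to $f=\min F$; your estimates and constants match theirs line by line.
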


\noindent
We present two examples of set-valued mappings satisfying all hypotheses of Theorem \ref{thm-min}:

\begin{ex}\label{ex1}
Set $a=0$, $b=1$, $p(x)=q(x)=1$ for all $x\in [0,1]$ (so that $K=3/16$), and define $F:\R\to 2^\R$ by setting
\[F(t)=\begin{cases}
\{0\} & \text{if $t\le 0$} \\
[t^2,\sqrt{t}] & \text{if $0<t<1$} \\
[\sqrt{t},t^2] & \text{if $t\ge 1$.}
\end{cases}\]
Then, $F$ satisfies the hypotheses of Theorem \ref{thm-min} with $\alpha=1$, $s=3/2$, $c\in(0,3/16)$, and $d=1$.
\end{ex}

\noindent
The paper is organized as follows: in Section \ref{sec2} we recall some basic notions of set-valued analysis and nonsmooth critical point theory; in Section \ref{sec3}  we establish a variational framework for problem \eqref{odi} under general assumptions; in Section \ref{sec4} we prove our main results; and in Section \ref{sec5} we present an application to an o.d.e. with a discontinuous nonlinearity.
\vskip4pt
\noindent
{\bf Notation:} Throughout the paper, $C$ will denote a positive constant, whose value may change from case to case. The standard measure used in the paper in Lebesgue, except when otherwise specified. Moreover, in defining intervals like $\Lambda$ above, we shall use the convention $0^{-1}=\infty$.

\section{Some recalls of set-valued and nonsmooth analysis}\label{sec2}

\noindent
We recall some basic notions from set-valued analysis (for details see \cite{AF}). Let $X$, $Y$ be topological spaces, $F:X\to 2^Y$ be a set-valued mapping. $F$ is {\em upper semicontinuous (u.s.c.)} if, for any open set $A\subseteq Y$, the set
\[F^+(A)=\{x\in X:\,F(x)\subseteq A\}\]
is open in $X$. The following lemma is well known, but we prove it for the reader's convenience:

\begin{lem}\label{usc-cc}
If $F:\R\to 2^\R$ is a set-valued mapping with compact convex values, then the following are equivalent:
\begin{enumroman}
\item\label{usc-cc1} $F$ is u.s.c.;
\item\label{usc-cc2} $\min F,\,\max F:\R\to\R$ are l.s.c., u.s.c. respectively as single-valued mappings.
\end{enumroman}
\end{lem}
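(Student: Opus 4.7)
The plan is to exploit the fact that, since $F(t)$ is compact and convex in $\R$, it is simply the closed interval $[\min F(t),\max F(t)]$. With this identification, set containment $F(t)\subseteq A$ for open $A$ reduces to two one-sided inequalities on the endpoints, which is precisely where the l.s.c./u.s.c. hypotheses will intervene. Throughout I will use the standard characterizations that a scalar function $g$ is l.s.c. iff $\{g>c\}$ is open for every $c\in\R$, and u.s.c. iff $\{g<c\}$ is open for every $c\in\R$.

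For \ref{usc-cc1} $\Rightarrow$ \ref{usc-cc2}, fix $c\in\R$ and $t_0$ with $\min F(t_0)>c$. Then $F(t_0)\subseteq(c,+\infty)$, and since $(c,+\infty)$ is open, u.s.c.\ of $F$ supplies a neighborhood $U$ of $t_0$ on which $F(t)\subseteq(c,+\infty)$, i.e.\ $\min F(t)>c$. Hence $\min F$ is l.s.c.; the argument for $\max F$ is symmetric, using the open set $(-\infty,c)$.

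For \ref{usc-cc2} $\Rightarrow$ \ref{usc-cc1}, take any open $A\subseteq\R$ and any $t_0\in F^+(A)$. The key geometric observation is that the compact interval $F(t_0)=[\min F(t_0),\max F(t_0)]$ sits inside the open set $A$, so by a standard compactness/finite-subcover argument there exists $\delta>0$ with
\[
\bigl(\min F(t_0)-\delta,\,\max F(t_0)+\delta\bigr)\subseteq A.
\]
Now the l.s.c.\ of $\min F$ at $t_0$ yields a neighborhood $U_1$ on which $\min F(t)>\min F(t_0)-\delta$, and the u.s.c.\ of $\max F$ at $t_0$ yields a neighborhood $U_2$ on which $\max F(t)<\max F(t_0)+\delta$. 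On $U_1\cap U_2$ the interval $F(t)$ therefore lies inside the enlarged interval, hence in $A$, proving that $F^+(A)$ is open.

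The only mildly delicate point is the thickening step in the second implication: one must justify that a compact subset of an open set admits a positive uniform $\delta$-enlargement still contained in the open set (equivalently, that the distance from the compact set to the complement is strictly positive). Once this is noted, the rest is a direct bookkeeping of the two one-sided semicontinuities.
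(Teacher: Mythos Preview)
Your proof is correct and follows essentially the same approach as the paper's. In particular, the implication \ref{usc-cc1}$\Rightarrow$\ref{usc-cc2} is identical, and for \ref{usc-cc2}$\Rightarrow$\ref{usc-cc1} both arguments hinge on the same observation---that the compact interval $F(t_0)$ fits inside a bounded open interval contained in $A$---with the only cosmetic difference that the paper first proves $F^+(I)$ is open for every bounded open interval $I\subseteq A$ and then writes $F^+(A)=\bigcup_I F^+(I)$, whereas you carry out the same computation pointwise at each $t_0\in F^+(A)$.
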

\begin{proof}
We first prove that \ref{usc-cc1} implies \ref{usc-cc2}. Fix $M\in\R$. The super-level set
\[\{t\in\R:\,\min F(t)>M\}=F^+(M,\infty)\]
is open, hence $\min F$ is l.s.c. In a similar way we prove that $\max F$ is u.s.c.
\vskip2pt
\noindent
Now we prove that \ref{usc-cc2} implies \ref{usc-cc1}. Let $I\subset\R$ be a bounded open interval. Then, the set
\[F^+(I)=\{t\in\R:\,\max F(t)<\sup I\}\cap\{t\in\R:\,\min F(t)>\inf I\}\]
is open. Now, let $A\subseteq\R$ be an open set. We denote by $\mathcal{I}$ the family of bounded open intervals $I\subseteq A$, hence clearly
\[\bigcup_{I\in\mathcal{I}}I=A.\]
For all $t\in F^+(A)$, the set $F(t)\subset A$ is convex and compact, hence there is $I\in\mathcal{I}$ s.t. $F(t)\subset I$. So the set
\[F^+(A)=\bigcup_{I\in\mathcal{I}}F^+(I)\]
is open, and $F$ turns out to be u.s.c.
\end{proof}

\noindent
A single-valued mapping $f:X\to Y$ is a {\em selection} of $F$ if $f(x)\in F(x)$ for all $x\in X$. If $F:\R\to 2^\R$, the Aumann-type integral is a defined by
\beq\label{aumann}
\int_0^t F(\tau)\,d\tau=\Big\{\int_0^t f(\tau)\,d\tau:\,f:\R\to\R \ \text{measurable selection of $F$}\Big\}
\eeq
(note that, if $F$ is u.s.c. with compact convex values, then the integral is well defined, as both $\min F$ and $\max F$ are Baire measurable by Lemma \ref{usc-cc}, and it has convex, compact values by \cite[Theorem 8.6.3]{AF}).
\vskip2pt
\noindent
Now we recall some notions of nonsmooth critical point theory (for details see \cite{GP}). Let $(X,\|\cdot\|)$ be a Banach space, $(X^*,\|\cdot\|_*)$ be its topological dual, and $I:X\to\R$ be a functional. $I$ is said to be {\em locally Lipschitz continuous} if for every $u\in X$ there exist a neighborhood $U$ of $u$ and $L>0$ such that 
\[|I(v)-I(w)|\leq L\|v-w\| \ \mbox{for all $v,w\in U$.}\]
The {\em generalized directional derivative} of $I$ at $u$ along $v\in X$ is
\[I^\circ(u;v)=\limsup_{w\to u\above 0pt t\to 0^+}\frac{I(w+tv)-I(w)}{t}.\]
The {\em generalized subdifferential} of $I$ at $u$ is the set
\[\partial I(u)=\left\{u^*\in X^*:\, \langle u^*,v\rangle\leq I^\circ(u;v) \ \mbox{for all $v\in X$}\right\}.\]

\begin{lem}\label{gd}
If $I,\,J:X\to\R$ are locally Lipschitz continuous, then
\begin{enumroman}
\item\label{gd1} $I^\circ(u;\cdot)$ is positively homogeneous, sub-additive and continuous for all $u\in X$;
\item\label{gd2} $I^\circ(u;-v)=(-I)^\circ(u;v)$ for all $u,v\in X$;
\item\label{gd3} if $I\in C^1(X)$, then $I^\circ(u;v)=I'(u)(v)$ for all $u,v\in X$;
\item\label{gd4} $(I+J)^\circ(u;v)\leq I^\circ(u;v)+J^\circ(u;v)$ for all $u,v\in X$.
\end{enumroman}
\end{lem}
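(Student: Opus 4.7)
The plan is to derive each of the four statements directly from the definition
\[
I^\circ(u;v) = \limsup_{w\to u,\, t\to 0^+} \frac{I(w+tv) - I(w)}{t},
\]
using only the local Lipschitz hypothesis and elementary properties of $\limsup$. All four items are standard results from Clarke's nonsmooth calculus, so the argument is essentially bookkeeping with difference quotients.

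For \ref{gd1}, I would first prove positive homogeneity via the substitution $s = \alpha t$ in the difference quotient for $\alpha > 0$ (the case $\alpha = 0$ being trivial). Sub-additivity follows from the decomposition
\[
I(w+t(v_1+v_2)) - I(w) = \bigl[I(w+tv_1+tv_2) - I(w+tv_1)\bigr] + \bigl[I(w+tv_1) - I(w)\bigr]
\]
together with the observation that $w+tv_1 \to u$ whenever $w\to u$ and $t\to 0^+$, so that taking $\limsup$ term by term yields $I^\circ(u;v_1+v_2) \le I^\circ(u;v_1) + I^\circ(u;v_2)$. For continuity, the local Lipschitz constant $L$ of $I$ near $u$ gives $|I(w+tv_1) - I(w+tv_2)| \le Lt\|v_1 - v_2\|$ for $w$ close to $u$ and $t$ small, which after division by $t$ and passage to $\limsup$ yields the Lipschitz estimate $|I^\circ(u;v_1) - I^\circ(u;v_2)| \le L\|v_1 - v_2\|$.

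For \ref{gd2}, I would apply the change of variables $w' = w + tv$: since $(w,t) \to (u,0^+)$ is equivalent to $(w',t)\to(u,0^+)$, one obtains
\[
I^\circ(u;-v) = \limsup_{w'\to u,\,t\to 0^+}\frac{I(w')-I(w'+tv)}{t} = \limsup_{w'\to u,\,t\to 0^+}\frac{(-I)(w'+tv)-(-I)(w')}{t} = (-I)^\circ(u;v).
\]
For \ref{gd3}, when $I \in C^1(X)$ the mean value theorem applied to $s \mapsto I(w+stv)$ gives $[I(w+tv)-I(w)]/t = I'(\zeta_{w,t})(v)$ for some $\zeta_{w,t}$ on the segment $[w,w+tv]$; since $\zeta_{w,t} \to u$ and $I'$ is continuous at $u$, the $\limsup$ is in fact a limit equal to $I'(u)(v)$. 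Finally, \ref{gd4} is immediate from the identity $(I+J)(w+tv) - (I+J)(w) = [I(w+tv)-I(w)] + [J(w+tv)-J(w)]$ together with subadditivity of $\limsup$.

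I do not foresee any serious obstacle, since each item reduces to a direct manipulation of the defining difference quotient; the only point requiring mild care is the change of variables in \ref{gd2}, where one must verify that $(w,t) \mapsto (w + tv, t)$ produces the same filter of approach to $(u,0^+)$, so that the two upper limits are computed over the same net of pairs.
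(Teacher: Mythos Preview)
Your argument is correct in all four parts; each manipulation of the difference quotient is the standard one, and the only subtle point---the change of base point $w\mapsto w+tv$ (or $w\mapsto w+tv_1$) preserving the filter of approach to $(u,0^+)$---you have flagged and handled correctly.

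Note, however, that the paper does not actually prove Lemma~\ref{gd}: it is stated as a recall of standard facts from nonsmooth critical point theory, with the reader referred to \cite{GP} (and implicitly to Clarke~\cite{C1}) for details. So there is no ``paper's own proof'' to compare against; your write-up supplies exactly the elementary verification that the cited references contain.
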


\begin{lem}\label{gg}
If $I,J:X\to\R$ are locally Lipschitz continuous, then
\begin{enumroman}
\item\label{gg1} $\partial I(u)$ is convex, closed and weakly$^*$ compact for all $u\in X$;
\item\label{gg2} $\partial I:X\to 2^{X^*}$ is an upper semicontinuous set-valued mapping with respect to the weak$^*$ topology on $X^*$;
\item\label{gg3} if $I\in C^1(X)$, then $\partial I(u)=\{I'(u)\}$ for all $u\in X$;
\item\label{gg4} $\partial(\lambda I)(u)=\lambda\partial I(u)$ for all $\lambda\in\R$, $u\in X$;
\item\label{gg5} $\partial(I+J)(u)\subseteq\partial I(u)+\partial J(u)$ for all $u\in X$;
\item\label{gg6} for all $u,v\in X$ there exists $u^*\in\partial I(u)$ such that  $u^*(v)=I^\circ(u;v)$;
\item\label{gg7} if $u$ is a local minimizer (or maximizer) of $I$, then $0\in\partial I(u)$.
\end{enumroman}
\end{lem}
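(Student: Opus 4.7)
The plan is to derive all seven items from the basic calculus of $I^\circ$ collected in Lemma \ref{gd}, combined with three standard tools of functional analysis: the Hahn-Banach extension theorem, the Hahn-Banach separation theorem in the weak$^*$ topology, and the Banach-Alaoglu theorem. Items \ref{gg3}, \ref{gg4}, and \ref{gg7} are direct consequences of the definitions. For \ref{gg3}, when $I\in C^1(X)$, the identity $I^\circ(u;v)=I'(u)(v)$ from Lemma \ref{gd}\ref{gd3} reduces the defining inequality for $\partial I(u)$ to $\langle u^*,v\rangle\le I'(u)(v)$, which applied to $\pm v$ forces $u^*=I'(u)$. For \ref{gg4} I would split into $\lambda\ge 0$ and $\lambda<0$, the second case using Lemma \ref{gd}\ref{gd2}. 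For \ref{gg7}, at a local minimum the sign of the difference quotient $(I(u+tv)-I(u))/t$ for small $t>0$ gives $I^\circ(u;v)\ge 0$ for every $v$, whence $0\in\partial I(u)$; the maximizer case follows by applying the minimizer statement to $-I$ and using \ref{gg4}.

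For \ref{gg1}, convexity of $\partial I(u)$ is immediate from linearity of the defining inequality, and weak$^*$ closedness follows because that inequality passes to weak$^*$ limits. To obtain weak$^*$ compactness I would first prove norm-boundedness: if $L$ is a local Lipschitz constant of $I$ near $u$, then $I^\circ(u;v)\le L\|v\|$, and combining this with Lemma \ref{gd}\ref{gd2} yields $|\langle u^*,v\rangle|\le L\|v\|$ for every $u^*\in\partial I(u)$; Banach-Alaoglu then concludes. Item \ref{gg2} is in the same spirit: for $u_n\to u$ and $u_n^*\in\partial I(u_n)$ converging weak$^*$ to $u^*$, the $\limsup$ in the definition of $I^\circ$ together with continuity in the base point gives $\langle u^*,v\rangle\le\limsup_n I^\circ(u_n;v)\le I^\circ(u;v)$, so $u^*\in\partial I(u)$; the uniform boundedness from \ref{gg1} upgrades this pointwise statement to the set-valued notion of weak$^*$ upper semicontinuity.

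The real content lies in \ref{gg6} and \ref{gg5}. For \ref{gg6}, the natural idea is Hahn-Banach extension: define a linear functional $\phi(tv)=t\,I^\circ(u;v)$ on the one-dimensional subspace $\R v$ and dominate it by the sublinear continuous majorant $p(w)=I^\circ(u;w)$ given by Lemma \ref{gd}\ref{gd1}. The only delicate check is $\phi(tv)\le p(tv)$ for $t<0$, which reduces to $-I^\circ(u;v)\le I^\circ(u;-v)$; this follows from subadditivity applied to $0=I^\circ(u;0)\le I^\circ(u;v)+I^\circ(u;-v)$. The resulting extension is automatically continuous since $p$ is, belongs to $\partial I(u)$ by construction, and realizes $u^*(v)=I^\circ(u;v)$. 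Finally, \ref{gg5} is a separation argument: if some $u^*\in\partial(I+J)(u)$ failed to lie in the weak$^*$ compact convex set $\partial I(u)+\partial J(u)$ (compact by \ref{gg1}), the Hahn-Banach separation theorem in the weak$^*$ topology would produce $v\in X$ with
\[
\langle u^*,v\rangle>\sup_{\bar u^*\in\partial I(u)}\bar u^*(v)+\sup_{\bar v^*\in\partial J(u)}\bar v^*(v)=I^\circ(u;v)+J^\circ(u;v),
\]
where the last equality uses \ref{gg6}; this contradicts $\langle u^*,v\rangle\le(I+J)^\circ(u;v)\le I^\circ(u;v)+J^\circ(u;v)$ from Lemma \ref{gd}\ref{gd4}. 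The main obstacle throughout is the careful handling of the weak$^*$ topology without any reflexivity assumption on $X$, which forces Banach-Alaoglu and weak$^*$ separation to replace more direct compactness arguments.
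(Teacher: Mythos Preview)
The paper does not prove this lemma: it is stated in Section~\ref{sec2} as a recall of standard facts from nonsmooth critical point theory, with the reader referred to \cite{GP} and \cite{C1} for details. So there is no proof in the paper to compare your proposal against.

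That said, your argument is essentially correct and follows the classical route found in the references. One small point worth tightening is item~\ref{gg2}: you phrase the closed-graph step sequentially (``for $u_n\to u$ and $u_n^*\in\partial I(u_n)$ converging weak$^*$ to $u^*$\ldots''), but the weak$^*$ topology on $X^*$ is not metrizable in general, so the passage from a closed-graph property to genuine upper semicontinuity requires nets (or the observation that on a neighborhood of $u$ all values $\partial I(w)$ sit in a common weak$^*$-compact ball, so a subnet argument applies). You also invoke ``continuity in the base point'' for $I^\circ$, which is really upper semicontinuity of $(u,v)\mapsto I^\circ(u;v)$; this is true and standard, but is not among the items listed in Lemma~\ref{gd}, so you should either prove it or cite it explicitly. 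With these two clarifications your proof is complete.
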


\noindent
By Lemma \ref{gg} \ref{gg1}, we may define for all $u\in X$
\begin{equation}\label{m}
m(u)=\min_{u^*\in\partial I(u)}\|u^*\|_*,
\end{equation}
We say that $u\in X$ is a {\em (generalized) critical point} of $I$ if $m(u)=0$ (i.e. $0\in\partial I(u)$). We say that $I$ satisfies the nonsmooth {\em Palais-Smale condition} (for short {\bf PS}) if every sequence $(u_n)$ in $X$, s.t.  $(I(u_n))$ is bounded in $\R$ and $m(u_n)\to 0$, admits a convergent subsequence.
\vskip2pt
\noindent
Nonsmooth critical point theory is by now widely developed, as it includes extensions of most well-known results in classical critical point theory for $C^1$ functionals (such as the mountain pass theorem, deformation lemmas, and Morse theory). We will make use of the following three critical points theorem due to {\sc Bonanno \& Marano} \cite{BM} (here rephrased for the reader's convenience):

\begin{thm}\label{3cp}
Let $(X,\|\cdot\|)$ be a reflexive Banach space, $\Phi, \Psi:X\to\R$ be locally Lipschitz continuous functionals, set for all $r\neq 0$
\[\varphi(r)=\sup_{\Phi(u)\le r}\frac{\Psi(u)}{r}\]
and $I_\lambda=\Phi-\lambda\Psi$ for all $\lambda>0$. Assume that
\begin{enumroman}
\item\label{3cp1} $\Phi$ is sequentially weakly l.s.c. and coercive;
\item\label{3cp2} $\Psi$ is sequentially weakly u.s.c.;
\item\label{3cp3} $I_\lambda$ satisfies {\bf PS} for all $\lambda>0$;
\item\label{3cp4} there exist $r>\inf_X\Phi$, $\bar u\in X$ s.t. $\Phi(\bar u)>r$, $\varphi(r)<\Psi(\bar u)/\Phi(\bar u)$.
\end{enumroman}
Then, for all $\lambda\in\big(\Phi(\bar u)/\Psi(\bar u),1/\varphi(r)\big)$ the functional $I_\lambda$ admits at least three critical points in $X$.
\end{thm}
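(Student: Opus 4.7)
My plan is to obtain Theorem \ref{thm-min} as a specialization of the abstract three-solutions result that, in turn, follows from Theorem \ref{3cp}. I work on the Sobolev space $X=\{u\in H^1(a,b):u(a)=0\}$ equipped with the equivalent inner-product norm $\|u\|^2=\int_a^b[p(u')^2+qu^2]\,dx$, and introduce the locally Lipschitz functionals
\[\Phi(u)=\frac{\|u\|^2}{2},\qquad \Psi(u)=\int_a^b G(u(x))\,dx,\qquad G(t)=\int_0^t\min F(\tau)\,d\tau.\]
By Lemma \ref{usc-cc}, $\min F$ is lower semicontinuous, so $G$ is well-defined and Lipschitz on bounded sets. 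The variational framework of Section \ref{sec3} (using the Aumann integral \eqref{aumann}) certifies that every critical point of $I_\lambda=\Phi-\lambda\Psi$ is a weak solution of \eqref{odi}.

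Checking hypotheses \ref{3cp1}--\ref{3cp3} of Theorem \ref{3cp} is routine: $\Phi$ is quadratic, hence sequentially weakly l.s.c.\ and coercive; the compact embedding $X\hookrightarrow C([a,b])$ combined with the subquadratic growth $s<2$ in \ref{thm-min1} makes $\Psi$ sequentially weakly upper semicontinuous and yields the Palais-Smale condition for $I_\lambda$ at every level (PS sequences are bounded because $\Phi$ grows like $\|u\|^2$ while $\Psi$ grows at most like $1+\|u\|^s$). The substance of the argument lies in producing $r$ and $\bar u$ verifying \ref{3cp4}. I take
\[r=\frac{p_0 c^2}{2(b-a)},\qquad \bar u(x)=\begin{cases}\dfrac{2d(x-a)}{b-a} & x\in[a,(a+b)/2],\\[4pt] d & x\in[(a+b)/2,b],\end{cases}\]
i.e.\ a piecewise affine test function rising linearly from $0$ at $a$ to a plateau at height $d$.

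The remainder amounts to two quantitative estimates. First, the Sobolev-type inequality $|u(x)|^2\le(b-a)p_0^{-1}\|u\|^2$ (obtained from $u(a)=0$ and Cauchy-Schwarz) translates $\Phi(u)\le r$ into $\|u\|_\infty\le c$; since \ref{thm-min1} gives $\min F\ge 0$, $G$ is nondecreasing on $[0,\infty)$ and $G(t)\le G(c)$ for $|t|\le c$, so $\Psi(u)\le(b-a)G(c)$ and
\[\varphi(r)\le\frac{2(b-a)^2 G(c)}{p_0 c^2}.\]
Second, a direct computation on $\bar u$ using $\|p\|_\infty,\|q\|_\infty<\infty$ produces
\[\Phi(\bar u)\le\frac{d^2[3\|p\|_\infty+(b-a)^2\|q\|_\infty]}{3(b-a)}=\frac{p_0 d^2}{4K(b-a)},\]
while the plateau contributes $\Psi(\bar u)\ge\tfrac12(b-a)G(d)$ and the linear portion forces $\Phi(\bar u)\ge p_0 d^2/(b-a)>r$. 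Combining these bounds with \ref{thm-min2} yields $\varphi(r)<\Psi(\bar u)/\Phi(\bar u)$ and shows $\Lambda\subseteq(\Phi(\bar u)/\Psi(\bar u),1/\varphi(r))$. Theorem \ref{3cp} then delivers at least three critical points of $I_\lambda$ for each $\lambda\in\Lambda$.

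The main obstacle is not in these inequalities but in the preceding variational reduction: justifying that critical points of the scalar functional $\Psi$ constructed from the lower selection $\min F$ actually solve the full set-valued inclusion $-(pu')'+qu\in\lambda F(u)$, rather than merely the single-valued equation $-(pu')'+qu=\lambda\min F(u)$. This delicate point, which relies on the structure of the Clarke subdifferential of an integral functional together with Lemma \ref{usc-cc} and the Aumann representation \eqref{aumann}, is handled in Section \ref{sec3} and invoked here as a black box.
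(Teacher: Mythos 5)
There is a fundamental mismatch here: the statement you were asked to prove is Theorem \ref{3cp} itself, i.e.\ the abstract three critical points theorem of Bonanno and Marano for locally Lipschitz functionals on a reflexive Banach space. Your proposal does not prove this statement; it \emph{invokes} it (``Theorem \ref{3cp} then delivers at least three critical points'') in order to derive Theorem \ref{thm-min}. What you have written is essentially the content of the paper's Section \ref{sec4} (the verification of hypotheses \ref{3cp1}--\ref{3cp4} for the concrete functionals $\Phi(u)=\|u\|^2/2$ and $\Psi(u)=\int_a^b J_f(u)\,dx$, with the choices of $r$ and the piecewise affine $\bar u$), and as an argument for the assigned statement it is circular. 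Nowhere do you address the actual content of Theorem \ref{3cp}: why conditions \ref{3cp1}--\ref{3cp4}, for $\lambda$ in the interval $\big(\Phi(\bar u)/\Psi(\bar u),1/\varphi(r)\big)$, force $I_\lambda$ to possess three critical points. (For the record, the paper does not prove this theorem either; it is recalled from the reference \cite{BM} and used as a black box, exactly as you use it.)

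A genuine proof would have to work at the abstract level. Roughly: by \ref{3cp1} and reflexivity the sublevel set $\{\Phi\le r\}$ is sequentially weakly compact, and since $\lambda\varphi(r)<1$ the restriction of $I_\lambda$ to this set attains its infimum at a point $u_0$ with $\Phi(u_0)<r$, so that $u_0$ is an unconstrained local minimizer and hence satisfies $0\in\partial I_\lambda(u_0)$ by Lemma \ref{gg} \ref{gg7}; the inequality $\lambda>\Phi(\bar u)/\Psi(\bar u)$ is then used to produce a second, distinct local minimizer (or to show $u_0$ is not a global minimizer), and a nonsmooth deformation or mountain-pass argument resting on the {\bf PS} condition \ref{3cp3} supplies the third critical point. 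None of these steps --- the weak compactness argument, the localization of the constrained minimizer in the interior of $\{\Phi\le r\}$, the separation of the two minima, and the minimax construction --- appears in your write-up, so the proposal leaves the assigned statement entirely unproved.
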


%multifunzioni scs, integrale Aumann, funzionali ll, derivata, subdifferenziale, punto critico, PS, BM

\section{Variational framework}\label{sec3}

\noindent
This section is mainly devoted to establishing a variational framework for problem \eqref{odi} under very general assumptions. We consider mixed boundary conditions, but our framework can easily be adapted to Dirichlet, Neumann, or periodic conditions as well as to the case of non-autonomous reaction terms. We generalize the approach of \cite{I} and of other works on the subject, which will be recalled below (see Remark \ref{prev}).
\vskip2pt
\noindent
Our assumptions on the set-valued mapping $F$ are the following:
\begin{itemize}[leftmargin=0.7cm]
\item[${\bf H}_0$] $F:\R\to 2^\R$ is u.s.c. with compact convex values and admits a Baire measurable selection $f:\R\to\R$ s.t. for all $t\in\R$
\[|f(t)|\le\alpha(1+|t|^{s-1}) \ (\alpha>0,\,s>1).\]
\end{itemize}
We define a convenient function space (for details see {\sc Brezis} \cite{B}):
\[X=\{u\in H^1(a,b):\,u(a)=0\}, \ \|u\|=\Big(\int_a^b (p(x)(u')^2+q(x)u^2)\,dx\Big)^\frac{1}{2}.\]
Due to the positivity of $p$ and non-negativity of $q$, it is easily seen that $\|\cdot\|$ is a norm on $X$ and it is equivalent to the $H^1(a,b)$-norm restricted to $X$. Moreover, $(X,\|\cdot\|)$ is a Hilbert space with inner product
\[\langle u,v\rangle=\int_a^b(p(x)u'v'+q(x)uv)\,dx.\]
We also note that the embedding $X\hookrightarrow C^0([a,b])$ is compact and for all $u\in X$
\beq\label{emb}
\|u\|_\infty\le\Big(\frac{b-a}{p_0}\Big)^\frac{1}{2}\|u\|
\eeq
(by $\|\cdot\|_\nu$ we denote the norm of $L^\nu(a,b)$, for any $\nu\in[1,\infty]$). We seek solutions in the space $X$:

\begin{definition}\label{sol}
We say that $u\in X$ is a (weak) solution of \eqref{odi}, if there exists $w\in L^\nu(a,b)$ ($\nu>1$) s.t.
\begin{enumroman}
\item\label{sol1} $\displaystyle\langle u,v\rangle=\lambda\int_a^b wv\,dx$ for all $v\in X$;
\item\label{sol2} $w(x)\in F(u(x))$ for a.e. $x\in(a,b)$.
\end{enumroman}
\end{definition}

\begin{rem}\label{class}
Definition \ref{sol} is quite natural, as it agrees with an intuitive notion of 'classical solution' provided the involved functions are sufficiently smooth. Indeed, let $p\in C^1([a,b])$, $q\in C^0([a,b])$, and $u\in X\cap C^2([a,b])$ be a (weak) solution of \eqref{odi}. By \ref{sol1} and integration by parts, we have for all $v\in X$
\[\int_a^b\big(-(p(x)u')'v+(p(x)u'v)'+q(x)uv\big)\,dx=\lambda\int_a^b wv\,dx.\]
Taking an arbitrary $v\in H^1_0(a,b)$ we see that
\[-(p(x)u')'+q(x)u=\lambda w \ \text{in $(a,b)$,}\]
which in turn implies for any $v\in X$
\[0=p(a)u'(a)v(a)=p(b)u'(b)v(b),\]
hence $u'(b)=0$. Thus, by \ref{sol2}, $u$ solves \eqref{odi} in a pointwise sense.
\end{rem}

\noindent
For all $t\in\R$ we set
\beq\label{jf}
J_f(t)=\int_0^t f(\tau)\,d\tau,
\eeq
and we define two functionals by setting for all $u\in X$
\[\Phi(u)=\frac{\|u\|^2}{2}, \ \Psi(u)=\int_a^b J_f(u)\,dx.\]
The following lemma displays some easy properties of $\Phi$:

\begin{lem}\label{phi}
The functional $\Phi\in C^1(X)$ is coercive, weakly l.s.c., and for all $u,v\in X$
\[\Phi'(u)(v)=\langle u,v\rangle.\]
\end{lem}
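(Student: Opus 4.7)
The plan is to exploit the fact that $(X,\|\cdot\|)$ is a Hilbert space, so that $\Phi(u)=\tfrac{1}{2}\langle u,u\rangle$ is essentially the squared norm associated with an inner product, and then derive the three properties from standard Hilbert space facts.

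\textbf{Differentiability and the formula for $\Phi'(u)$.} First I would compute, for arbitrary $u,v\in X$, the increment
\[
\Phi(u+v)-\Phi(u)=\tfrac{1}{2}\langle u+v,u+v\rangle-\tfrac{1}{2}\langle u,u\rangle=\langle u,v\rangle+\tfrac{1}{2}\|v\|^2,
\]
using the bilinearity and symmetry of $\langle\cdot,\cdot\rangle$. Since the map $v\mapsto \langle u,v\rangle$ is a bounded linear functional on $X$ (by Cauchy--Schwarz, $|\langle u,v\rangle|\le \|u\|\,\|v\|$) and the remainder $\tfrac12\|v\|^2$ is $o(\|v\|)$, this identifies the Fréchet derivative as $\Phi'(u)(v)=\langle u,v\rangle$. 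To obtain $\Phi\in C^1(X)$, I would check that the map $u\mapsto \Phi'(u)\in X^*$ is continuous: for $u_n\to u$ in $X$,
\[
\|\Phi'(u_n)-\Phi'(u)\|_{X^*}=\sup_{\|v\|\le 1}|\langle u_n-u,v\rangle|\le \|u_n-u\|\to 0,
\]
again by Cauchy--Schwarz.

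\textbf{Coercivity.} This is immediate: $\Phi(u)=\|u\|^2/2\to\infty$ as $\|u\|\to\infty$.

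\textbf{Weak lower semicontinuity.} I would use the fact that the norm on any Banach space is sequentially weakly l.s.c.; hence if $u_n\rightharpoonup u$ in $X$, then $\|u\|\le \liminf_n \|u_n\|$, and squaring (which preserves the inequality on $[0,\infty)$) gives
\[
\Phi(u)=\tfrac{1}{2}\|u\|^2\le \tfrac{1}{2}\liminf_{n\to\infty}\|u_n\|^2=\liminf_{n\to\infty}\Phi(u_n).
\]

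Since all three items reduce to standard Hilbert space arguments, there is no real obstacle here; the only point worth stressing is that everything is computed with respect to the weighted inner product $\langle u,v\rangle=\int_a^b (p(x)u'v'+q(x)uv)\,dx$ introduced in the previous paragraph, whose equivalence with the ordinary $H^1$ inner product on $X$ is what guarantees that $(X,\|\cdot\|)$ is indeed a Hilbert space and thus justifies the use of Cauchy--Schwarz and of weak l.s.c. of the norm.
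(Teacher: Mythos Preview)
Your proposal is correct and is exactly the standard argument one would expect; the paper in fact states this lemma without proof, treating it as an elementary consequence of the Hilbert space structure of $(X,\|\cdot\|)$, so your write-up simply fills in the routine details the authors omitted.
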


\noindent
The next lemma is the most delicate part of our method. It is inspired by the results of \cite{C}:

\begin{lem}\label{psi}
If hypotheses ${\bf H}_0$ hold, then the functional $\Psi:X\to\R$ is locally Lipschitz continuous, sequentially weakly continuous, and for all $u\in X$, $w^*\in\partial\Psi(u)$ there exists $w\in L^{s'}(a,b)$ ($s'=s/(s-1)$) s.t.
\begin{enumroman}
\item\label{psi1} $\displaystyle w^*(v)=\int_a^b wv\,dx$ for all $v\in X$;
\item\label{psi2} $w(x)\in F(u(x))$ for a.e. $x\in(a,b)$.
\end{enumroman}
\end{lem}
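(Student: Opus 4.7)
The plan is to establish the three assertions in order, starting from a preliminary Lipschitz estimate on $J_f$. From \eqref{jf} and ${\bf H}_0$ one gets directly
\[|J_f(t) - J_f(\sigma)| \le \alpha(1 + \max(|t|,|\sigma|)^{s-1})|t - \sigma| \qquad \text{for all } t,\sigma\in\R,\]
so $J_f:\R\to\R$ is locally Lipschitz continuous, hence Clarke subdifferentiable, with the classical pointwise representation
\[\partial J_f(t) = \Big[\essliminf_{\sigma\to t}f(\sigma),\;\esslimsup_{\sigma\to t}f(\sigma)\Big].\]

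From here, the local Lipschitz continuity and sequential weak continuity of $\Psi$ are routine. For the former, I would fix $R>0$ and, for $u,v\in X$ with $\|u\|,\|v\|\le R$, combine the displayed pointwise estimate with \eqref{emb} (which bounds $\|u\|_\infty,\|v\|_\infty$ by some $C_R$) to get $|\Psi(u)-\Psi(v)|\le L_R\|u-v\|$. For the latter, if $u_n\rightharpoonup u$ in $X$, then by compactness of $X\hookrightarrow C^0([a,b])$ one has $u_n\to u$ uniformly on $[a,b]$, and the continuity of $J_f$ yields $\Psi(u_n)\to\Psi(u)$ by dominated convergence.

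The core of the proof is the subdifferential representation. Its first ingredient is the pointwise inclusion $\partial J_f(t)\subseteq F(t)$ for every $t\in\R$: since $\min F(\sigma)\le f(\sigma)\le\max F(\sigma)$ and, by Lemma \ref{usc-cc}, $\min F$, $\max F$ are l.s.c., u.s.c.\ respectively, the representation above gives
\[\essliminf_{\sigma\to t}f(\sigma)\ge\min F(t),\qquad \esslimsup_{\sigma\to t}f(\sigma)\le\max F(t),\]
hence $\partial J_f(t)\subseteq[\min F(t),\max F(t)]=F(t)$, where the last equality uses that $F(t)\subset\R$ is compact and convex.

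The main obstacle is the second ingredient, a Chang-type theorem \cite{C} for integral functionals. I would prove
\[\Psi^\circ(u;v)\le\int_a^b J_f^\circ(u(x);v(x))\,dx \qquad \text{for all } u,v\in X\]
by a Fatou argument on the difference quotients $(J_f(w(x)+tv(x))-J_f(w(x)))/t$ as $w\to u$ in $X$ (hence in $C^0([a,b])$) and $t\to 0^+$, with dominating function provided by the local Lipschitz constant of $J_f$ on an interval controlling $\|w\|_\infty$ in a ball around $u$; here the $L^\infty$-control coming from \eqref{emb} is what compensates the absence of a pointwise growth control on $F$ itself. Fixing $w^*\in\partial\Psi(u)$ and denoting by $\sigma_A$ the support function of an interval $A\subset\R$, this yields $w^*(v)\le\int_a^b \sigma_{\partial J_f(u(x))}(v(x))\,dx$ for every $v\in X$; a measurable selection argument (cf.\ \cite[Ch.~8]{AF}) then produces a measurable $w:(a,b)\to\R$ with $w(x)\in\partial J_f(u(x))$ a.e.\ and $w^*(v)=\int_a^b wv\,dx$. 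Combined with the previous paragraph this gives \ref{psi2}, and the bound $|w(x)|\le\alpha(1+\|u\|_\infty^{s-1})$ (from the Lipschitz constant of $J_f$ on $[-\|u\|_\infty,\|u\|_\infty]$) yields $w\in L^\infty(a,b)\subseteq L^{s'}(a,b)$, establishing \ref{psi1}.
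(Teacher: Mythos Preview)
Your overall strategy is sound and close to the paper's: establish $\partial J_f(t)\subseteq F(t)$ pointwise, then show that every $w^*\in\partial\Psi(u)$ is represented by a measurable section of $x\mapsto\partial J_f(u(x))$. The pointwise inclusion and the Fatou estimate $\Psi^\circ(u;v)\le\int_a^b J_f^\circ(u(x);v(x))\,dx$ are fine (one correction: your displayed formula for $\partial J_f(t)$ should be an inclusion $\subseteq$, not an equality---this is what \cite[Example~1]{C} actually yields, and only the inclusion is used).

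The genuine gap is the step you label ``a measurable selection argument (cf.\ \cite[Ch.~8]{AF})''. Measurable selection theorems produce measurable $w$ with $w(x)\in\partial J_f(u(x))$, but they do \emph{not} tell you that a given $w^*\in X^*$ coincides with $v\mapsto\int wv\,dx$ for one of those selections. A priori $w^*$ lives in $X^*$ (a Sobolev dual), and nothing yet says it is represented by integration against an $L^{s'}$ function. What is missing is an extension/duality step: the sublinear functional $p(v)=\int_a^b J_f^\circ(u(x);v(x))\,dx$ is finite on all of $L^1(a,b)$ (indeed $p(v)\le M\|v\|_1$ with $M$ the Lipschitz constant of $J_f$ on $[-\|u\|_\infty,\|u\|_\infty]$), so Hahn--Banach extends $w^*$ to $\tilde w^*$ on $L^1(a,b)$ still dominated by $p$; only then does $(L^1)^*=L^\infty$ give $w\in L^\infty\subset L^{s'}$ with $\tilde w^*(v)=\int wv\,dx$, and testing against $v=\pm\chi_E$ yields $w(x)\in\partial J_f(u(x))$ a.e.

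This Hahn--Banach step is exactly what the paper supplies, though packaged differently: it introduces the auxiliary functional $\tilde\Psi$ on $L^s(a,b)$, invokes \cite[Theorem~2.1]{C} to represent $\partial\tilde\Psi(u)\subset(L^s)^*$ by $L^{s'}$-functions lying in $\partial J_f(u(\cdot))$, and then extends any $w^*\in\partial\Psi(u)\subset X^*$ via Hahn--Banach (dominated by $\tilde\Psi^\circ(u;\cdot)$) to an element of $\partial\tilde\Psi(u)$. Your route through the Fatou inequality on $X$ is more self-contained and avoids citing Chang's theorem, but it cannot bypass the same extension mechanism; once you insert it, the two arguments are essentially equivalent.
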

\begin{proof}
First we note that $J_f:\R\to\R$ is locally Lipschitz continuous, and by \cite[Example 1]{C} we have for all $t\in\R$
\beq\label{dej}
\partial J_f(t)\subseteq\big[\underline f(t),\,\overline f(t)\big],
\eeq
where we have set
\beq\label{appr}
\underline f(t)=\lim_{\delta\to 0^+}\essinf_{|h|<\delta}f(t+h), \ \overline f(t)=\lim_{\delta\to 0^+}\esssup_{|h|<\delta}f(t+h).
\eeq
We also have for all $t\in\R$
\beq\label{mm}
\min F(t)\le\underline f(t)\le\overline f(t)\le\max F(t).
\eeq
Indeed, for all $\delta>0$ we have
\[\essinf_{|h|<\delta}f(t+h)\ge\inf_{0<|h|<\delta}\min F(t+h).\]
Letting $\delta\to 0^+$, and recalling that $\min F$ is l.s.c., we have
\[\underline f(t)\ge\min F(t).\]
The other inequality of \eqref{mm} is achieved in a similar way.
\vskip2pt
\noindent
Now we set for all $u\in L^s(a,b)$
\[\tilde\Psi(u)=\int_a^b J_f(u)\,dx.\]
By ${\bf H}_0$, $\tilde\Psi:L^s(a,b)\to\R$ is well defined. Besides, $\tilde\Psi$ is Lipschitz continuous on any bounded subset of $L^s(a,b)$. Indeed, for all $M>0$, $u,v\in L^s(a,b)$ with $\|u\|_s,\|v\|_s\le M$, by ${\bf H}_0$ and H\"older inequality we have
\begin{align*}
|\tilde\Psi(u)-\tilde\Psi(v)| &\le \int_a^b\Big|\int_u^v f(\tau)\,d\tau\Big|\,dx \\
&\le \alpha\int_a^b\big(1+|u|^{s-1}+|v|^{s-1}\big)|u-v|\,dx \\
&\le C(1+M^{s-1})\|u-v\|_s.
\end{align*}
Now fix $u\in L^s(a,b)$, $w^*\in\partial\tilde\Psi(u)$. By \cite[Theorem 4.11]{B} we can find $w\in L^{s'}(a,b)$ s.t. for all $v\in L^s(a,b)$
\[w^*(v)=\int_a^b wv\,dx.\]
By \cite[Theorem 2.1]{C} we have $w(x)\in\partial J_f(u(x))$ a.e. in $(a,b)$, which by \eqref{dej} and \eqref{mm} implies
\[w(x)\in\big[\min F(u(x)),\max F(u(x))\big] \ \text{a.e. in $(a,b)$.}\]
Recalling that $F(u(x))$ is a convex set, we finally get $w(x)\in F(u(x))$ a.e. in $(a,b)$.
\vskip2pt
\noindent
Now we turn back to $\Psi$. Thanks to the embedding $X\hookrightarrow L^s(a,b)$, we may identify $X$ with a linear subspace of $L^s(a,b)$. By using \eqref{emb}, we easily see that $\Psi=\restr{\tilde\Psi}{X}$ is well defined and locally Lipschitz continuous. Moreover, let $u\in X$ and $w^*\in\partial\Psi(u)$. By definition of $\partial\Psi$, we have for all $v\in X$
\[w^*(v)\le\Psi^\circ(u;v)\le\tilde\Psi^\circ(u;v),\]
and by Lemma \ref{gd} \ref{gd1} $\tilde\Psi^\circ(u;\cdot):L^s(a,b)\to\R$ is a semi-norm. By the Hahn-Banach theorem, there exists $\tilde w^*\in (L^s(a,b))^*$ s.t. $\restr{\tilde w^*}{X}=w^*$ and for all $v\in L^s(a,b)$
\[\tilde w^*(v)\le\tilde\Psi^\circ(u;v).\]
So, we have $\tilde w^*\in\partial\tilde\Psi(u)$, and reasoning as above we can find $w\in L^{s'}(a,b)$ satisfying \ref{psi1} and \ref{psi2} (this argument improves \cite[Theorem 2.2]{C}, as it requires no density assumption).
\vskip2pt
\noindent
Finally, we prove that $\Psi$ is sequentially weakly continuous in $X$. Indeed, if $u_n\rightharpoonup u$ in $X$, then $(u_n)$ is bounded in $X$ and, passing if necessary to a subsequence, we have $u_n\to u$ in $C^0([a,b])$, hence $\Psi(u_n)\to\Psi(u)$. Then, we easily retrieve $\Psi(u_n)\to\Psi(u)$ for the original sequence.
\end{proof}

\noindent
For all $\lambda>0$ we set $I_\lambda=\Phi-\lambda\Psi$, thus defining an energy functional for problem \eqref{odi}. The following lemma displays the main properties of $I_\lambda$.

\begin{lem}\label{il}
If hypotheses ${\bf H}_0$ hold, then for all $\lambda>0$ the functional $I_\lambda:X\to\R$ is locally Lipschitz continuous and if $u\in X$ is a critical point of $I_\lambda$, then $u$ is a solution of \eqref{odi}. Moreover, every bounded {\bf PS}-sequence for $I_\lambda$ has a convergent subsequence in $X$. 
\end{lem}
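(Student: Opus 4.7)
The plan is to verify the three assertions in turn. Local Lipschitz continuity of $I_\lambda=\Phi-\lambda\Psi$ is immediate: by Lemma \ref{phi} the functional $\Phi$ is of class $C^1$ (hence locally Lipschitz), by Lemma \ref{psi} the functional $\Psi$ is locally Lipschitz, and this class is stable under linear combinations.

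For the critical-point/solution implication, suppose $u\in X$ satisfies $0\in\partial I_\lambda(u)$. Using Lemma \ref{gg}\ref{gg3}, \ref{gg4}, \ref{gg5} together with Lemma \ref{phi} one has
\[\partial I_\lambda(u)\subseteq\{\Phi'(u)\}-\lambda\,\partial\Psi(u),\]
so there exists $w^*\in\partial\Psi(u)$ with $\Phi'(u)=\lambda w^*$ in $X^*$. Lemma \ref{psi} then produces $w\in L^{s'}(a,b)$ with $w(x)\in F(u(x))$ a.e.\ and $w^*(v)=\int_a^b wv\,dx$ for every $v\in X$. Combined with the identity $\Phi'(u)(v)=\langle u,v\rangle$ of Lemma \ref{phi}, the pair $(u,w)$ satisfies conditions \ref{sol1}--\ref{sol2} of Definition \ref{sol}, and $u$ is a weak solution of \eqref{odi}.

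The main work lies in the \textbf{PS}-type statement. Let $(u_n)\subset X$ be bounded with $m(u_n)\to 0$ (the quantity from \eqref{m} associated with $I_\lambda$). Reflexivity of $X$ and the compactness of the embedding $X\hookrightarrow C^0([a,b])$ yield, along a subsequence, $u_n\rightharpoonup u$ in $X$ and $u_n\to u$ uniformly on $[a,b]$. Pick $u_n^*\in\partial I_\lambda(u_n)$ with $\|u_n^*\|_*=m(u_n)$; arguing as in the previous paragraph we find $w_n\in L^{s'}(a,b)$ with $w_n(x)\in F(u_n(x))$ a.e.\ and
\[u_n^*(v)=\langle u_n,v\rangle-\lambda\int_a^b w_n v\,dx\qquad\text{for all }v\in X.\]
The crucial point is that the growth bound $|f(t)|\le\alpha(1+|t|^{s-1})$ from ${\bf H}_0$ passes through the essinf/esssup operations in \eqref{appr} to give $|\underline f(t)|,|\overline f(t)|\le\alpha(1+|t|^{s-1})$; so by \eqref{dej} and the inclusion $w_n(x)\in\partial J_f(u_n(x))$ established inside the proof of Lemma \ref{psi}, one obtains $|w_n(x)|\le\alpha(1+|u_n(x)|^{s-1})$ a.e. Since $(u_n)$ is uniformly bounded on $[a,b]$ via \eqref{emb}, so is $(w_n)$.

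To conclude, test with $v=u_n-u$: the term $u_n^*(u_n-u)\to 0$ because $\|u_n^*\|_*\to 0$ and $(u_n-u)$ is bounded in $X$, while $\int_a^b w_n(u_n-u)\,dx\to 0$ by uniform convergence together with the $L^\infty$-bound on $(w_n)$. Hence $\langle u_n,u_n-u\rangle\to 0$, and combined with $\langle u,u_n-u\rangle\to 0$ (from weak convergence) this yields $\|u_n-u\|^2\to 0$, i.e.\ strong convergence in the Hilbert space $X$. The main obstacle I foresee is precisely securing this uniform control on $(w_n)$: the selection $w_n$ is not $f\circ u_n$ but a general element of $\partial J_f(u_n(\cdot))$, so the bound in ${\bf H}_0$ has to be transferred to $\underline f,\overline f$ and then to $w_n$, which is exactly where hypothesis ${\bf H}_0$ is genuinely used.
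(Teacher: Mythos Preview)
Your proposal is correct and follows essentially the same route as the paper: subdifferential calculus for the decomposition $\partial I_\lambda(u)\subseteq\{\Phi'(u)\}-\lambda\partial\Psi(u)$, then for the \textbf{PS}-part weak/uniform convergence of a subsequence, selection of a minimal-norm element $u_n^*$, representation via $w_n$, and testing with $u_n-u$. If anything, you are more careful than the paper on the one delicate point: the paper writes the estimate $|w_n|\le\alpha(1+|u_n|^{s-1})$ directly, whereas you correctly observe that ${\bf H}_0$ bounds only the single selection $f$, so one must pass through $w_n(x)\in\partial J_f(u_n(x))\subseteq[\underline f(u_n(x)),\overline f(u_n(x))]$ (from the proof of Lemma~\ref{psi}) and note that the growth bound is inherited by $\underline f,\overline f$.
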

\begin{proof}
By Lemmas \ref{gg} \ref{gg5}, \ref{phi}, and \ref{psi} we easily see that $I_\lambda$ is well defined and locally Lipschitz continuous in $X$ and for all $u\in X$, $u^*\in\partial I_\lambda(u)$ there exists $w^*\in\partial\Psi(u)$ s.t. for all $v\in X$
\beq\label{rep}
u^*(v)=\langle u,v\rangle-\lambda w^*(v).
\eeq
In particular, let $u\in X$ be s.t. $0\in\partial I_\lambda(u)$. By \eqref{rep} and Lemma \ref{psi}, there exists $w\in L^{s'}(a,b)$ satisfying \ref{sol1} and \ref{sol2} of Definition \ref{sol}, hence $u$ is a solution of \eqref{odi}.
\vskip2pt
\noindent
Finally, let $(u_n)$ be a bounded sequence in $X$, s.t.  $(I_\lambda(u_n))_n$ is bounded in $\R$ and $m_\lambda(u_n)\to 0$ ($m_\lambda$ defined as in \eqref{m}). By reflexivity of $X$ and the compact embedding $X\hookrightarrow C^0([a,b])$, by passing to a subsequence we have $u_n\rightharpoonup u$ in $X$ and $u_n\to u$ in $C^0([a,b])$. By Lemma \ref{gg} \ref{gg3}--\ref{gg6}, for all $n\in\N$ there exists $w^*_n\in\partial\Psi(u_n)$ s.t.
\[m_\lambda(u_n)=\|u_n-\lambda w^*_n\|_*\]
(where we have used the Riesz theorem to identify $u_n$ with an element of $X^*$). Furthermore, we can find $w_n\in L^{s'}(a,b)$ satisfying \ref{psi1} and \ref{psi2} of Lemma \ref{psi}. Now we exploit the collected information to get the following estimates for all $n\in\N$:
\begin{align*}
\|u_n-u\|^2 &= \langle u_n-u,u_n-u\rangle \\
&= \langle u_n, u_n-u\rangle +o(1) \\
&\le \|u_n-\lambda w^*_n\|_*\|u_n-u\|+\lambda w^*_n(u_n-u)+o(1) \\
&\le m_\lambda(u_n)\|u_n-u\|+\lambda\int_a^b |w_n(u_n-u)|\,dx+o(1) \\
&\le \lambda\alpha\int_a^b(1+|u_n|^{s-1})|u_n-u|\,dx+o(1) \\
&\le \lambda C\|u_n-u\|_\infty+o(1),
\end{align*}
and the latter tends to $0$ as $n\to\infty$. So, $u_n\to u$ in $X$.
\end{proof}

\begin{rem}\label{prev}
Possible choices of the selection $f$ in ${\bf H}_0$ are the following:
\[f_1(t)=\min F(t), \ f_2(t)=\max F(t), \ f_3(t)=\frac{1}{2}\big(\min F(t)+\max F(t)\big).\]
In general, since $\min F$ and $\max F$ are Baire measurable due to Lemma \ref{usc-cc}, any convex, continuous combination of the two yields a further Baire measurable selection of $F$. Also, the following case has been considered in several papers (see \cite{F}, \cite{I}, \cite{KRT}):
\[f_4(t)=\begin{cases}
\max F(t) & \text{if $t<0$} \\
\min F(t) & \text{if $t\ge 0$.}
\end{cases}\]
This is mainly due to historical reasons: indeed in \cite{F} (possibly the earliest paper dealing with general differential inclusions in a variational perspective) the Author applied metric critical point theory to a functional of the type
\[u\mapsto\frac{\|u\|^2}{2}-\int_a^b\min\int_0^u F(\tau)\,d\tau\,dx\]
(where the set-valued integral defined in \eqref{aumann} is involved), and $f_4$ exactly produces the desired identity
\[J_{f_4}(t)=\min\int_0^t F(\tau)\,d\tau\]
for all $t\in\R$. Nevertheless, it seems that there is no intrinsic reason to prefer one of the above choices if $f$, so one basically may choose the most convenient case by case.
\end{rem}

\begin{rem}\label{locsum}
A natural question, in connection with this method, is whether we can replace in ${\bf H}_0$ the right-hand side $\alpha(1+|t|^{s-1})$ by any non-negative, continuous, increasing and convex $\eta(t)$ (for instance $e^t$). This may ensure local Lipschitz continuity of the functional $\Psi$ (by means of the embedding $X\hookrightarrow C^0([a,b])$), but might not allow such a natural representation of the elements of $\partial\Psi(u)$ at any $u\in X$, as in Lemma \ref{psi}.
\end{rem}

%spazi, soluzione, funzionali

\section{Three solutions for an o.d.i.}\label{sec4}

\noindent
In this section we prove our main result, namely the existence of three solutions for problem \eqref{odi}, for all $\lambda$ lying in an explicitly determined interval. Our assumptions on $F$ are the following:
\begin{itemize}[leftmargin=0.7cm]

\item[${\bf H}_1$] $F:\R\to 2^\R$ is u.s.c. with compact convex values, admits a Baire measurable selection $f:\R\to\R$, and there exist $\alpha>0$, $s\in(1,2)$, and $0<c<d$ s.t.
\begin{enumroman}
\item\label{h1} $|f(t)|\le\alpha(1+|t|^{s-1})$ for all $t\in\R$;
\item\label{h2} $J_f(t)\ge 0$ for all $t\in[0,d]$ ($J_f$ defined as in \eqref{jf});
\item\label{h3} $\displaystyle\frac{1}{c^2}\max_{|t|\le c}J_f(t)<\frac{K}{d^2} J_f(d)$ (with $K=3p_0\big(12\|p\|_\infty+4(b-a)^2\|q\|_\infty\big)^{-1}$).
\end{enumroman}
\end{itemize}
Due to ${\bf H}_1$ \ref{h3}, the interval
\beq\label{intlambda}
\Lambda=\left(\frac{p_0d^2}{2K(b-a)^2}J_f(d)^{-1}, \ \frac{p_0 c^2}{2(b-a)^2}\Big(\max_{|t|\le c}J_f(t)\Big)^{-1}\right)
\eeq
is nondegenerate. Our result reads as follows:

\begin{thm}\label{main}
Let hypotheses ${\bf H}_1$ hold. Then, for all $\lambda\in\Lambda$ problem \eqref{odi} has at least three solutions.
\end{thm}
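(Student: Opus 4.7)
The plan is to apply the Bonanno--Marano three critical points theorem (Theorem \ref{3cp}) to the functionals $\Phi,\Psi$ defined on the Hilbert space $X$ in Section \ref{sec3}; by Lemma \ref{il}, critical points of $I_\lambda=\Phi-\lambda\Psi$ are weak solutions of \eqref{odi}, so three critical points will yield three solutions. Items (i) and (ii) of Theorem \ref{3cp} are already contained in Lemma \ref{phi} (coercivity and weak lower semicontinuity of $\Phi$) and Lemma \ref{psi} (sequential weak continuity of $\Psi$, which is stronger than weak upper semicontinuity). For the Palais--Smale condition (iii), I would first note that integrating the growth estimate ${\bf H}_1$~\ref{h1} and applying the embedding \eqref{emb} gives $|\Psi(u)|\le C(\|u\|+\|u\|^s)$; since $s<2$ the functional $I_\lambda$ is coercive, so every PS-sequence is bounded in $X$, and boundedness combined with Lemma \ref{il} yields a convergent subsequence.

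The crux of the proof is the geometric condition (iv). Set $r=p_0c^2/(2(b-a))$; the embedding \eqref{emb} forces $\|u\|_\infty\le c$ whenever $\Phi(u)\le r$, hence $\Psi(u)\le(b-a)\max_{|t|\le c}J_f(t)$, giving the upper bound
\[
\varphi(r)\le\frac{2(b-a)^2}{p_0c^2}\max_{|t|\le c}J_f(t),
\]
so that $1/\varphi(r)$ is at least the right endpoint of $\Lambda$. As test function I would take the piecewise linear ramp
\[
\bar u(x)=\begin{cases}\dfrac{2d(x-a)}{b-a} & \text{if }x\in\bigl[a,\tfrac{a+b}{2}\bigr],\\[4pt] d & \text{if }x\in\bigl[\tfrac{a+b}{2},b\bigr],\end{cases}
\]
which belongs to $X$, takes values in $[0,d]$, and attains $d$ on a set of measure $(b-a)/2$. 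A direct computation gives $\Phi(\bar u)\ge p_0d^2/(b-a)>r$ (as $d>c$) together with the upper bound $\Phi(\bar u)\le\tfrac{d^2}{3(b-a)}\bigl(3\|p\|_\infty+(b-a)^2\|q\|_\infty\bigr)$, while ${\bf H}_1$~\ref{h2} yields the lower bound $\Psi(\bar u)\ge\tfrac{b-a}{2}J_f(d)$. Dividing these estimates shows that $\Phi(\bar u)/\Psi(\bar u)$ is at most the left endpoint of $\Lambda$, and the constant $K$ is calibrated precisely so that hypothesis ${\bf H}_1$~\ref{h3} becomes the required strict inequality $\varphi(r)<\Psi(\bar u)/\Phi(\bar u)$.

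Applying Theorem \ref{3cp}, for every $\lambda$ in the interval $\bigl(\Phi(\bar u)/\Psi(\bar u),1/\varphi(r)\bigr)\supseteq\Lambda$ the functional $I_\lambda$ has at least three critical points, which by Lemma \ref{il} are three solutions of \eqref{odi}. The main obstacle is choosing $\bar u$ correctly: it must vanish at $a$ (Dirichlet condition), have controlled $X$-norm in which both $\|p\|_\infty$ and $\|q\|_\infty$ enter, and simultaneously deliver a large $\Psi$-value from only the information at $t=d$ provided by ${\bf H}_1$~\ref{h2}--\ref{h3}. The ramp-and-plateau profile is engineered so that the resulting ratios match the two endpoints of $\Lambda$ exactly, which is the origin of the factor $K=3p_0/(12\|p\|_\infty+4(b-a)^2\|q\|_\infty)$ in the hypothesis.
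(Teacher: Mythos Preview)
Your proposal is correct and follows essentially the same approach as the paper: the same choice of $r=p_0c^2/(2(b-a))$, the same ramp-and-plateau test function $\bar u$, and the same upper and lower estimates for $\Phi(\bar u)$, $\Psi(\bar u)$ and $\varphi(r)$ leading to $\Lambda\subseteq\bigl(\Phi(\bar u)/\Psi(\bar u),1/\varphi(r)\bigr)$. Your bound $\varphi(r)\le 2(b-a)^2/(p_0c^2)\max_{|t|\le c}J_f(t)$ even corrects a minor typo in the paper's display \eqref{phir} (where $c$ appears in place of $c^2$).
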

\begin{proof}
First we note that ${\bf H}_1$ clearly implies ${\bf H}_0$, hence we can use all results from Section \ref{sec3}. We define $X$ and functionals $\Phi$, $\Psi$, and $I_\lambda$ as above, and by Lemmas \ref{phi} and \ref{psi} we see that hypotheses \ref{3cp1} and \ref{3cp2} of Theorem \ref{3cp} are satisfied. 
\vskip2pt
\noindent
We see now that $I_\lambda$ is coercive for all $\lambda>0$. Indeed, by ${\bf H}_1$ \ref{h1} ($s<2$) and the continuous embedding $X\hookrightarrow C^0([a,b])$, for all $u\in X$ we have
\begin{align*}
I_\lambda(u) &\ge \frac{\|u\|^2}{2}-\lambda C\int_a^b(|u|+|u|^s)\,dx \\
&\ge \frac{\|u\|^2}{2}-\lambda C\big(\|u\|+\|u\|^s),
\end{align*}
and the latter tends to $\infty$ as $\|u\|\to\infty$.
\vskip2pt
\noindent
As a consequence, $I_\lambda$ satisfies {\bf PS} for all $\lambda>0$. Indeed, let $(u_n)$ be a {\bf PS}-sequence for $I_\lambda$. By coercivity, $(u_n)$ is bounded in $X$, hence by Lemma \ref{il} is has a convergent subsequence. Thus, also hypothesis \ref{3cp3} of Theorem \ref{3cp} holds.
\vskip2pt
\noindent
Furthermore, we note that $\inf_X\Phi=0$. We set
\[r=\frac{c^2p_0}{2(b-a)},\]
and we define $\bar u\in X$ by setting
\[\bar u(x)=\begin{cases}
\displaystyle\frac{2d}{b-a}(x-a) & \text{if $\displaystyle x\in\Big[a,\frac{a+b}{2}\Big[$} \\
d & \text{if $\displaystyle x\in\Big[\frac{a+b}{2},b\Big]$.}
\end{cases}\]
We evaluate our functionals at $\bar u$. First, we note that
\begin{align}\label{phig}
\Phi(\bar u) &= \frac{2d^2}{(b-a)^2}\int_a^\frac{a+b}{2} p(x)\,dx+\frac{2d^2}{(b-a)^2}\int_a^\frac{a+b}{2} q(x)(x-a)^2\,dx+\frac{d^2}{2}\int_\frac{a+b}{2}^b q(x)\,dx \\
&\ge \frac{d^2 p_0}{b-a} \nonumber \\
&> r. \nonumber
\end{align}
Besides, we have
\begin{align}\label{phil}
\Phi(\bar u) &\le \frac{d^2\|p\|_\infty}{b-a}+\frac{d^2(b-a)\|q\|_\infty}{12}+\frac{d^2(b-a)\|q\|_\infty}{4} \\
&= \frac{d^2p_0}{4K(b-a)}, \nonumber
\end{align}
where $K$ is defined as in ${\bf H}_1$ \ref{h3}. Now fix $u\in X$ with $\|u\|\le r$. By \eqref{emb} we have $\|u\|_\infty\le c$, hence
\[\Psi(u)\le (b-a)\max_{|t|\le c}J_f(t),\]
which in turn implies
\beq\label{phir}
\varphi(r)\le\frac{2(b-a)^2}{cp_0}\max_{|t|\le c}J_f(t).
\eeq
By ${\bf H}_1$ \ref{h2} we have
\[\Psi(\bar u)\ge\frac{b-a}{2}J_f(d),\]
which, together with ${\bf H}_1$ \ref{h3} and \eqref{phil}, yields
\begin{align}\label{psiphi}
\frac{\Psi(\bar u)}{\Phi(\bar u)} &\ge \frac{4K(b-a)}{d^2p_0}\int_{\frac{a+b}{2}}^b J_f(d)\,d\tau \\
&= \frac{2K(b-a)^2}{d^2p_0}J_f(d) \nonumber \\
&> \frac{2(b-a)^2}{cp_0}\max_{|t|\le c}J_f(t) \nonumber \\
&\ge \varphi(r). \nonumber
\end{align}
Now, by \eqref{phig} and \eqref{psiphi} we see that assumption \ref{3cp4} of Theorem \ref{3cp} also holds. By \eqref{phir} and \eqref{psiphi} we have
\[\Lambda\subseteq\Big(\frac{\Phi(\bar u)}{\Psi(\bar u)},\frac{1}{\varphi(r)}\Big),\]
where $\Lambda$ is defined by \eqref{intlambda}. Thus, for all $\lambda\in\Lambda$ the functional $I_\lambda$ has at least three critical points in $X$. By Lemma \ref{il}, each of such critical points turns out to be a solution of \eqref{odi}, which concludes the proof.
\end{proof}

\noindent
As a special case, we prove the result stated in the Introduction:
\vskip6pt
\noindent
{\bf Proof of Theorem \ref{thm-min}.} We set for all $t\in\R$
\[f(t)=\min F(t).\]
By Lemma \ref{usc-cc}, $f:\R\to\R$ is a l.s.c. (in particular, Baire measurable) selection of $F$, and assumptions \ref{thm-min1}, \ref{thm-min2} imply that ${\bf H}_1$ hold. Then, the conclusion follows from Theorem \ref{main}. \qed

\section{Application: an o.d.e. with a discontinuous nonlinearity}\label{sec5}

\noindent
As a consequence, our results yields the existence of three nontrivial weak solutions for an o.d.e. with mixed boundary conditions of the following type:
\beq\label{ode}
\begin{cases}
-u''=\lambda g(u) &\text{in $(a,b)$} \\
u(a)=u'(b)=0,
\end{cases}
\eeq
Our assumptions on the nonlinearity $g$ are the following:
\begin{itemize}[leftmargin=0.7cm]

\item[${\bf H}_2$] The mapping $g:\R\to\R$ is almost everywhere continuous and there exist $\alpha>0$, $s\in(1,2)$, and $0<c<d$ s.t.
\begin{enumroman}
\item\label{g1} $g(t)\le\alpha(1+|t|^{s-1})$ for all $t\in\R$;
\item\label{g2} $\displaystyle\essinf_{t\in\R}g(t)>0$;
\item\label{g3} $\displaystyle \frac{J_g(c)}{c^2}<\frac{J_g(d)}{4d^2}$ ($J_g$ defined as in \eqref{jf}).
\end{enumroman}
\end{itemize}
Due to ${\bf H}_2$ \ref{g3}, the interval
\[\Lambda=\left(\frac{2d^2}{(b-a)^2J_g(d)}, \ \frac{c^2}{2(b-a)^2J_g(c)}\right)\]
is nondegenerate. Our result is on the same thread as those of \cite{BB}, with mixed boundary conditions as the main difference (such method was first used by {\sc Marano \& Motreanu} \cite{MM} and it makes a substantial use of a classical result of {\sc De Giorgi, Buttazzo \& Dal Maso} \cite{DBD}):

\begin{thm}\label{app}
Let hypotheses ${\bf H}_2$ hold. Then, for all $\lambda\in\Lambda$ problem \eqref{ode} has at least three nonzero solutions.
\end{thm}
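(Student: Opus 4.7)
The idea is to reduce problem \eqref{ode} to a differential inclusion of the form \eqref{odi} via a Filippov-type regularization of $g$, obtain three solutions by Theorem \ref{main}, and finally recover pointwise solvability of the o.d.e.\ through the measure-theoretic argument of {\sc De Giorgi, Buttazzo \& Dal Maso} \cite{DBD}.

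First I would define $F:\R\to 2^\R$ by $F(t)=[\underline g(t),\overline g(t)]$, with $\underline g$ and $\overline g$ as in \eqref{appr}, and let $D\subset\R$ denote the (null) set of essential discontinuity points of $g$. Then $F(t)=\{g(t)\}$ outside $D$, while on $D$ the interval $F(t)$ is still bounded by ${\bf H}_2$ \ref{g1}. By construction $F$ has compact convex values; since $\underline g$ is l.s.c.\ and $\overline g$ is u.s.c., Lemma \ref{usc-cc} gives that $F$ is u.s.c., and (after modifying $g$ on $D$ if necessary) $f=g$ is a Baire measurable selection of $F$.

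Next I would specialize Section \ref{sec4} to $p\equiv 1$, $q\equiv 0$, so that $p_0=\|p\|_\infty=1$, $\|q\|_\infty=0$ and hence $K=1/4$. Then ${\bf H}_2$ \ref{g1}--\ref{g3} translate into ${\bf H}_1$ \ref{h1}--\ref{h3} for $f=g$: the positivity given by ${\bf H}_2$ \ref{g2} upgrades the one-sided bound in \ref{g1} to $|f(t)|\le\alpha(1+|t|^{s-1})$ and yields $J_f(t)\ge 0$ on $[0,d]$ together with $\max_{|t|\le c}J_f(t)=J_g(c)$, while ${\bf H}_2$ \ref{g3} is exactly ${\bf H}_1$ \ref{h3} with $K=1/4$. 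Since the interval $\Lambda$ of the present statement coincides with that of \eqref{intlambda}, Theorem \ref{main} provides three distinct critical points $u_1,u_2,u_3\in X$ of $I_\lambda$, each solving \eqref{odi}, i.e.\ $-u_i''=\lambda w_i$ a.e.\ with $w_i(x)\in F(u_i(x))$ a.e.

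The main obstacle is to pass from the inclusion $w_i\in F(u_i)$ to the equality $w_i=g(u_i)$ a.e.\ in $(a,b)$. Here I would use the classical fact of \cite{DBD}: if $v\in H^1(a,b)$ and $E\subset\R$ is Lebesgue negligible, then $v'=0$ a.e.\ on $v^{-1}(E)$. Since ${\bf H}_2$ \ref{g1} and $u_i\in L^\infty$ force $w_i\in L^\infty$, and hence $u_i'\in H^1(a,b)$, I would apply this fact twice: first with $v=u_i$, $E=D$ to obtain $u_i'=0$ a.e.\ on $u_i^{-1}(D)$, then with $v=u_i'$, $E=\{0\}$ to obtain $u_i''=0$ a.e.\ on $\{u_i'=0\}$. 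Combining with $-u_i''=\lambda w_i$ yields $w_i=0$ a.e.\ on $u_i^{-1}(D)$, which contradicts $F(t)\subset[\essinf g,+\infty)$ and ${\bf H}_2$ \ref{g2} unless $|u_i^{-1}(D)|=0$; hence $F(u_i(x))=\{g(u_i(x))\}$ a.e.\ and $u_i$ solves \eqref{ode}. Nontriviality is immediate: by Lemma \ref{psi}, every $w^*\in\partial\Psi(0)$ is represented by some $w\in L^{s'}$ with $w(x)\in F(0)\subset[\essinf g,+\infty)$, so $w^*\neq 0$ and $0\notin\partial I_\lambda(0)$.
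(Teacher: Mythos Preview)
Your proposal is correct and follows the same route as the paper: Filippov regularization $F=[\underline g,\overline g]$, application of Theorem~\ref{main} with $p\equiv1$, $q\equiv0$ (hence $K=1/4$), and the De~Giorgi--Buttazzo--Dal~Maso argument to conclude $|u^{-1}(D)|=0$. The only difference is in this last step: the paper applies \cite{DBD} once and then derives $\underline g(u)\le0$ on $u^{-1}(D)$ by testing the weak equation against a nonnegative $v\in X$ supported there, whereas you apply \cite{DBD} a second time to $u'$ (legitimate since $w\in L^\infty$ gives $u'\in H^1$) to obtain $w=-u''/\lambda=0$ a.e.\ on $u^{-1}(D)$ directly; the paper also chooses the selection $f=(\underline g+\overline g)/2$ rather than $f=g$, which is immaterial since the two agree a.e.
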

\begin{proof}
First we note that, due to ${\bf H}_2$ \ref{g3}, problem \eqref{ode} does not admit the zero solution.
\vskip2pt
\noindent
We denote
\[D=\{t\in\R:\,g \ \text{is not continuous at $t$}\},\]
so ${\bf H}_2$ implies that $D$ has zero measure. We define $\underline g,\overline g:\R\to\R$ as in \eqref{appr}, hence $\underline g$ is l.s.c. and $\overline g$ is u.s.c. Clearly, we have at any $t\in\R\setminus D$
\beq\label{cont}
\underline g(t)=g(t)=\overline g(t).
\eeq
We set for all $t\in\R$
\[F(t)=\big[\underline g(t),\,\overline g(t)\big],\]
so $F:\R\to 2^\R$ is u.s.c. (Lemma \ref{usc-cc}) with compact convex values. The mapping $f:\R\to\R$ defined by setting for all $t\in\R$
\[f(t)=\frac{\underline g(t)+\overline g(t)}{2}\]
is Baire measurable and satisfies ${\bf H}_1$ \ref{h1} - \ref{h3} with $K=1/4$ (recall that $p=1$ and $q=0$). Also, the interval $\Lambda$ coincides with that defined in \eqref{intlambda}. Thus, by Theorem \ref{main}, problem \eqref{odi} (with $p(x)=1$ and $q(x)=0$ for all $x\in(a,b)$, hence $K=1/4$) has at least three solutions for any $\lambda\in\Lambda$.
\vskip2pt
\noindent
To conclude, we prove that any solution $u\in X$ of \eqref{odi} is in fact a (weak) solution of \eqref{ode}, i.e., that for all $v\in X$
\beq\label{ws}
\int_a^b u'v'\,dx=\lambda\int_a^b g(u)v\,dx.
\eeq
Indeed, we know from Definition \ref{sol} that there is $w\in L^{s'}(a,b)$ s.t. for all $v\in X$
\[\int_a^b u'v'\,dx=\lambda\int_a^b wv\,dx,\]
as well as
\beq\label{wgg}
w(x)\in\big[\underline g(u(x)),\,\overline g(u(x))\big] \ \text{a.e. in $(a,b)$.}
\eeq
We recall that $u\in C^0([a,b])$, and we claim that the set $u^{-1}(D)$ has zero Lebesgue measure. Otherwise, by \cite[Lemma 1]{DBD} we would have $u'(x)=0$ a.e. in $u^{-1}(D)$. So, choosing a non-negative $v\in X$ vanishing outside $u^{-1}(D)$, from \eqref{wgg} we would get
\[\int_{u^{-1}(D)}\underline g(u)v\,dx\le 0,\]
hence $\underline g(u)\le 0$ a.e. in $u^{-1}(D)$, against ${\bf H}_2$ \ref{g2}.
\vskip2pt
\noindent
Thus, by \eqref{cont} and \eqref{wgg} we obtain \eqref{ws}, which concludes the proof.
\end{proof}

\noindent
We conclude by presenting an example of a discontinuous mapping satisfying ${\bf H}_2$:

\begin{ex}\label{ex2}
Set $a=0$, $b=1$, and define $g:\R\to\R$ by setting
\[g(t)=\begin{cases}
e^t & \text{if $t<10$} \\
\{\ln(t)\} & \text{if $t\ge 10$.}
\end{cases}\]
Then, $g$ satisfies ${\bf H}_2$ with $\alpha>0$ big enough, any $s\in(1,2)$, $c=1$, and $d=10$.
\end{ex}

\vskip6pt
\noindent
{\bf Aknowledgement.} The authors are members of the Gruppo Nazionale per l'Analisi Matematica, la Probabilit\`a e le loro Applicazioni (GNAMPA) of the Istituto Nazionale di Alta Matematica (INdAM). This work was partially performed while the second and third authors were visiting the University of Messina, which they both recall with gratitude.

\bigskip

\end{document}